\def\hpq0{h^{p,q}_{\leq 0}}
\def\Hpq0{\H_{\leq 0}^{p,q}}
\def\dbar{\bar\partial}
\def\ddbar{\partial\dbar}
\def\R{{\mathbb R}}
\def\C{{\mathbb C}}
\def\Z{{\mathbb Z}}
\def\V{{\mathcal V}}
\def\H{{\mathcal H}}
\def\E{{\mathcal E}}
\def\L{{\mathcal L}}
\def\Im{{\rm Im\,  }}
\def\L{{\mathcal L}}
\def\be{\begin{equation}}
\def\ee{\end{equation}}
\newtheorem{thm}{Theorem}[section]
\newtheorem{lma}[thm]{Lemma}
\newtheorem{cor}[thm]{Corollary}
\newtheorem{prop}[thm]{Proposition}
\theoremstyle{definition}
\theoremstyle{remark}
\newtheorem{preremark}{Remark}
\newtheorem{preex}{Example}
\numberwithin{equation}{section}
\begin{document}

\begin{abstract} 

We give some remarks on  geodesics in the space of K\"ahler metrics that are defined for all time. Such curves are conjecturally induced by holomorphic vector fields, and we show that this is indeed so for regular geodesics, whereas the question for generalized geodesics is still open (as far as we know). We also give a result about the derivative of such geodesics which implies a variant of a theorem of Atiyah and Guillemin-Sternberg on convexity of the image of certain moment maps.

\end{abstract}

\title[]
{Long geodesics in the space of K\"ahler metrics.
 }

\author[]{ Bo Berndtsson}

\bigskip

\maketitle

\quad \quad \quad \quad \quad \quad {\it Dedicated to L\'aszl\'o Lempert, friend and collaborator.}

\section{Introduction}

Let $(X,\omega)$ be a compact K\"ahler manifold of complex dimension $n$. We denote by
$$
\H_\omega=\{u\in C^\infty; i\ddbar u+\omega>0\}
$$
 the Mabuchi space of potentials of K\"ahler metrics in the same cohomology class as $\omega$, \cite{Mabuchi}. A {\it complex geodesic} in this space is a complex curve $\tau\in U\to u_\tau\in \H_\omega$, where $U$ is a domain in $\C$, such that $i\ddbar_{\tau x} u_\tau +\omega \geq 0$ and 
$$
(i\ddbar_{\tau x}u_\tau +\omega)^{n+1}=0.
$$
Here the subscript of the $\ddbar$ operator indicates that it is to be taken with respect to the variables $\tau, x$ jointly. If $u_\tau$ does not depend on the imaginary part of $\tau$, this means that $u_t=u_\tau$ is a geodesic for the Riemannian structure on the Mabuchi space, see \cite{Semmes}, \cite{Donaldson}

For a complex geodesic
$$
i\ddbar_{\tau x}u_\tau 
$$
is a $(1,1)$-form on $U\times X$, where $U$ is an open set in $\C$, which we usually take to be a strip, if  $u_\tau$ is independent of $s$. We also note  that $i\ddbar_{\tau x}u_\tau +\omega\geq 0$ on $U\times X$  implies  that $u_\tau$ is subharmonic in $\tau$ for $x$ fixed, so $u_t$ is a convex function of $t$ if $u_\tau$ is independent of $s$.

A priori, $u_\tau$ should be smooth and strictly $\omega$-plurisubharmonic for $\tau$ fixed,  but as is customary we will allow also less regular functions (albeit always bounded), satisfying $i\ddbar_x u_\tau +\omega \geq 0$. Such functions are called generalized geodesics, and sometimes we will  refer to the bona fide geodesics as 'regular' geodesics.

In this note we will be mainly interested in 'long' geodesics, i. e. geodesics  $u_t$  defined for all $-\infty <t<\infty$. Assuming also that $u_t$ is of class $C^1$ with respect to $t$, we denote by  $\dot u_t$  the derivative of $u_t$ with respect to $t$ and put 
$$
A_t:= \{\dot u_t(x); x\in X\}
$$
and
$$
B_x:=\{\dot u_t(x); -\infty <t<\infty\}.
$$
Thus $A_t$ is the range of the velocity vector for $t$ fixed $\dot u_t(x)$ as $x$ varies, whereas $B_x$ is the range when $t$ varies and $x$ is fixed.
Our first result is as follows:
\begin{thm}
Let $u_t$ be a geodesic of class $C^1$ defined for all $t$ in $\R$ that is strictly $\omega$-plurisubharmonic for each $t$. Then $ A_t=\overline B_x$ for all $t$ and all $x$ outside a pluripolar set in $X$.
\end{thm}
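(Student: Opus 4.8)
The plan is to exploit the convexity of $t\mapsto u_t(x)$ together with the $(n+1)$-Monge–Ampère equation on the strip. First I would fix attention on the function $\dot u_t(x)$. For each fixed $x$, convexity of $u_t(x)$ in $t$ means $\dot u_t(x)$ is nondecreasing in $t$, so $B_x$ is an interval (possibly degenerate, possibly half-open) and $\overline{B_x}=[\ell(x),L(x)]$ where $\ell(x)=\lim_{t\to-\infty}\dot u_t(x)$ and $L(x)=\lim_{t\to+\infty}\dot u_t(x)$ (allowing $\pm\infty$, though one should check these are finite). The inclusion $\overline{B_x}\subseteq A_t$ is the substantive direction and is where the homogeneous Monge–Ampère equation must enter: the key point is that along a geodesic defined for all time, the image $A_t$ of the velocity cannot actually depend on $t$ — it is a fixed set $A$ — and moreover every value attained in the limit $t\to\pm\infty$ at some point $x$ is already attained at every finite time at \emph{some} point. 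The reverse inclusion, that for a.e. $x$ the value $\dot u_t(x)$ lies in $\overline{B_y}$ for the relevant $y$, is essentially tautological once one knows $A_t$ is independent of $t$, because $\dot u_t(x)\in A_t = A_s$ for all $s$, and one identifies which $B_y$ it belongs to.

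The heart of the matter is showing $A_t$ is independent of $t$, and here I would argue as follows. Consider the maximum $M(t)=\max_{x}\dot u_t(x)$ and minimum $m(t)=\min_x \dot u_t(x)$; these are the endpoints of $A_t$ (using strict $\omega$-plurisubharmonicity and $C^1$-regularity to know $A_t$ is a genuine compact interval, being the continuous image of the connected compact $X$). The convexity of $u_t(x)$ in $t$ gives that $M(t)$ is convex and nondecreasing and $m(t)$ is convex and nondecreasing. The claim $A_t\equiv A$ amounts to $M$ and $m$ being constant. To force this I would use the homogeneous Monge–Ampère equation: integrating $(i\ddbar_{\tau x}u_\tau+\omega)^{n+1}=0$ over slices and applying a Stokes/integration-by-parts argument shows that $\int_X \dot u_t\,(i\ddbar_x u_t+\omega)^n$ is affine in $t$ (this is the standard fact that the Mabuchi energy functional, or rather the relevant linear functional, is affine along geodesics), while $\int_X (i\ddbar_x u_t + \omega)^n$ is the fixed cohomological constant $V=\int_X\omega^n$. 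Combined with the pointwise monotonicity this pins down the asymptotic slopes; and a more careful local analysis near a point where the max is attained — using that the complex Hessian of $u_\tau$ in the $(\tau,x)$ variables is degenerate — shows that if $M(t)$ were strictly increasing on any interval one could propagate a strict inequality to $t=\pm\infty$ contradicting finiteness, or contradicting that $\overline{B_x}$ must fit inside a bounded $A_t$.

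Concretely, the mechanism I expect the author uses: at a point $x_0$ where $\dot u_{t_0}(x_0)=M(t_0)$, the function $x\mapsto \dot u_{t_0}(x)$ has a maximum, and differentiating the geodesic equation one finds $\dot u_{t}$ satisfies a linear equation (the linearization of the Monge–Ampère operator, a degenerate elliptic operator with no zeroth order term) in the $(t,x)$ variables on the strip; by the maximum principle for this operator, $M(t)$ is not merely convex but the extremum propagates, forcing $\dot u_t$ to be constant in the degenerate directions. Pushing this, one gets that $A_{t}$ stabilizes, and in the limit the pluripolar exceptional set arises because the linearized equation and the maximum-principle propagation only control things off the pluripolar set where the leaves of the Monge–Ampère foliation behave badly (equivalently, where $(i\ddbar_x u_t+\omega)^n$ may charge).

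**Main obstacle.** The hard part will be making the foliation/maximum-principle argument rigorous at the stated low regularity (merely $C^1$ in $t$, merely bounded and $\omega$-psh in $x$): the "linearized Monge–Ampère equation" for $\dot u_t$ is only formal, the Monge–Ampère foliation need not be well-defined everywhere, and one must do everything with weak (pluripotential-theoretic) tools, tracking exactly which pluripolar set has to be discarded. I would isolate this into a lemma of the form: \emph{if $u_\tau$ solves the HMA equation on the strip and is $C^1$ in $t$, then $\int_X \dot u_t\,\mathrm{MA}(u_t)$ is affine in $t$ and the function $(t,x)\mapsto \dot u_t(x)$ is, off a pluripolar set, constant along the Monge–Ampère leaves} — and then deduce the theorem from it by the elementary monotonicity/connectedness remarks above.
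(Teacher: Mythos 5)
There is a genuine gap, and it is in the direction you dismiss as ``essentially tautological.'' Once one knows $A_t=:A$ is independent of $t$, the inclusion $B_x\subseteq A$ is indeed immediate ($\dot u_t(x)\in A_t=A$ for every $t$). But the theorem also asserts $A\subseteq\overline{B_x}$ for quasi-every $x$: every value attained by $\dot u_t(\cdot)$ at \emph{some} point of $X$ must be approached by $\dot u_s(x)$ along the time-trajectory of one \emph{fixed} $x$. Nothing in ``$\dot u_t(x)\in A_s$ for all $s$'' gives this; a priori $B_x$ could be a tiny subinterval of $A$ for every $x$. This is exactly where the pluripolar exceptional set comes from in the paper: since the sup norm of $\dot u_t$ is $t$-independent, $u_t$ has linear growth, so one can set $g(x)=\lim_{t\to\infty}u_t(x)/t=\lim_{t\to\infty}\dot u_t(x)$; the upper semicontinuous regularization $g^*$ is plurisubharmonic on the compact $X$, hence constant $=C$, and $g=C$ off a pluripolar set. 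At such a point $x_0$ the right endpoint of $\overline{B_{x_0}}$ equals $C$, which dominates $\dot u_t(x)$ for all $x$ and $t$, i.e.\ dominates the right endpoint of $A$; the left endpoint is handled by replacing $u_t$ with $u_{-t}$. You have no substitute for this step.

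Secondly, your mechanism for proving $A_t\equiv A$ (a maximum principle for the linearized Monge--Amp\`ere operator and propagation along leaves) is not what the paper does and, as you yourself note, is problematic at $C^1$ regularity. The paper's route is much softer: for \emph{every} continuous $f$ the integral $\int_X f(\dot u_t)\,\omega_t^n/n!$ is $t$-independent (differentiate, use $\ddot u_{tt}=|\dbar\dot u_t|_t^2$ and Stokes), so the push-forward of $\omega_t^n/n!$ under $\dot u_t$ is a fixed measure $\mu$, and $A_t$ is its support (here strict positivity of $\omega_t$ is used). Your version of this, that only the Aubin--Yau energy derivative $\int_X\dot u_t\,\omega_t^n$ is affine, is the special case $f(s)=s$ and is not enough to identify $A_t$ with the support of a fixed measure. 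I would recommend reorganizing the argument around the invariance of $\mu$ and the function $g$ above rather than around the foliation and the maximum principle.
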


Our assumptions on the geodesic are very strong, but there is at least one class of examples: Let $V$ be a holomorphic vector field with flow $F_\tau(x)$. This is defined for all $\tau \in \C$ (since $X$ is compact), and we can define
$$
\omega_\tau=F_{\tau}^*(\omega).
$$
This a complex curve of K\"ahler forms on $X$, defined for all $\tau$ and $\omega_\tau$ always lies in the same cohomology class as $\omega$ in $H^{1,1}(X)$ since $F_\tau$ is homotopic to $F_0$; the identity map. By the $\ddbar$-lemma for K\"ahler manifolds we can write
$$
\omega_\tau=i\ddbar u_\tau+\omega
$$
for some functions $u_\tau$, uniquely determined up to a function $f(\tau)$. Thus we get a curve in $\H_\omega$, but the function $f(\tau)$ cannot always be chosen so that $u_\tau$ is a (complex)  geodesic.  This is, however, possible if $V$ satisfies the cohomological condition that the contraction of the K\"ahler form with $V$ is $\dbar$-exact,
$$
V\rfloor \omega =\dbar h
$$
for some function $h$. (It is always $\dbar$-closed since $V$ is holomorphic.) This condition is in particular fulfilled if the imaginary part of $V$ is Hamiltonian for the symplectic form $\omega$, and this in turn guarantees also that $u_\tau$ can be chosen to depend only on the real part of $\tau$ - since $\omega_\tau$ then is independent of the imaginary part of $\tau$. Hence all holomorphic vector fields whose imaginary parts are Hamiltonian furnish examples to which Theorem 1.1 applies.

The theorem has some features of an ergodic theorem, saying that averages over space equal averages over time; only here we don't really have averages, but images of maps. Moreover, the theorem says something about the real part of $V$, whereas it is the imaginary part that is measure preserving. A natural question in this context is whether there is an abstract counterpart of this result for measure-preserving maps that can in some sense be complexified. 

We next move to an even more special situation -- geodesics with multidimensional time. By this we mean functions $u_t$, defined for $t$ in an open subset of $\R^k$, and whose restriction to any real line is a (one-variable) geodesic. More precisely, we have functions 
$u_\tau$, with $\tau$ in a tube domain $U$ in $\C^k$ that are $\omega$-plurisubharmonic on $U\times X$ and depend only on $t$, and the restriction of $u_t$ to any real line is a geodesic.  We can then define subsets $A_t$ and $B_x$ of $\R^k$ as before, replacing $\dot u_t$ by the gradient of $u_t$ with respect to $t$. By essentially the same argument as in the one dimensional case and the Hahn-Banach theorem we then show that the same statement holds in the multidimensional case as well (Theorem 2.2). 
Since the gradient image of a convex function on $\R^k$ is always convex, this gives us the corollary that the sets $A_t=:A$ all coincide, and that $A$ is convex.

Just as in the one-dimensional case the main examples come from holomorphic vector fields whose imaginary parts are Hamiltonian, we can apply the higher dimensional case to commuting $k$-tuples of vector fields whose imaginary parts are Hamiltonian. As is well known, the gradient of $u_t$ with respect to $t$ (for, say, $t=0$) is then the moment map of the Hamiltonian action, and we get that the image of the moment map is convex. This is strongly related to the complex case of the celebrated theorem of Atiyah and Guillemin-Sternberg, see \cite{Atiyah}, \cite{Guillemin-Sternberg}.

In the next section we will give the proof of Theorem 1.1 and its multidimensional analog. In the section after that we will discuss when a holomorphic vector field induces a geodesic in the space of K\"ahler metrics, and give proofs of our claims above. Most likely, this material is well known, but I have not been able to find a reference.  In a final section we will discuss if there are other examples of geodesics that are defined for all time $t\in\R$ than the ones induced by holomorphic vector fields, and we will prove that in case the geodesic is regular there are not. This is based on an analysis of the Mabuchi K-energy along such geodesics which may have an independent interest. Briefly, what we will show is the following.

It is well known that the K-energy is convex along geodesics, and that its second derivative can vanish identically only if the geodesic is induced by a holomorphic vector field. For regular geodesics, we will prove that, unless the second derivative vanishes indentically, the metric on $U$ (the strip where the geodesic is defined) defined by
$$
\frac{d^2}{dt^2} K(u_t) d\tau\otimes d\bar\tau
$$
has strictly negative curvature, bounded from above by a negative constant depending only on the volume of $X$ and the dimension. This is clearly impossible if $u_t$ is defined for all time, since we would then have a metric of strictly negative curvature on $\C$.   Therefore the  second derivative vanishes identically, which implies that the geodesic is induced by a holomorphic vector field.

This last part of the paper depends heavily on a result of Dan Burns on Monge-Amp\`ere foliations. It also follows from  recent work of Wan and Wang, \cite{Wan-Wang}, who, among many other  things, give a direct proof of a curvature estimate from which our Theorem 4.2 follows. It seems to be an interesting problem if Theorem 4.2 also holds  for generalized geodesics (in which case it is known that the K-energy is still convex).

Finally I'd like to thank Robert Berman, S\'ebastien Boucksom and Tam\'as Darvas for helpful comments, and the referee for spotting several errors and points that needed to be clarified. 

\section{Theorem 1.1 and its multidimensional analog.}

An important part in our proof is played by a construction from \cite{B1} of a certain measure on $\R$ defined by a a geodesic of class $C^1$. The main observation is that if $u_t$ is such a geodesic defined for $a<t<b$ (i. e.  not necessarily defined for all times), and $f$ is a continuous function on $\R$, then the integrals
$$
\int_X f(\dot u_t) \omega_t^n/n! =:I(f)
$$
do not depend on $t$. Thus they define a measure on $\R$, $\mu$, the push forward of $\omega_t^n/n!$ under the map $\dot u_t: X\to \R$, and the main point is that this is the same for all $t$. In case the geodesic is induced by a holomorphic vector field $V$  with Hamiltonian imaginary part, it is well known (see also the next section), that $\dot u_0$ is the Hamiltonian 
of $\Im(V)$, and in that case $\mu$ is the measure studied by Duistermaat and Heckman, \cite{Duistermaat-Heckman}, in the case when the flow of $\Im(V)$ is periodic. 

If we assume that $\omega_t>0$ for all $t$, this implies that the supremum norm on $\dot u_t$ does not depend on $t$, since it is the right endpoint of the support of $\mu$. It has been proved by Darvas, \cite{Darvas}, that this actually holds even without the positivity assumption on $\omega_t$, and in a suitable formulation even without the $C^1$-assumption on the geodesic.

In our case, when $u_t$ is defined for all $t$ an immediate consequence of this is that 
$$
u_t\leq u_0+C|t|,
$$
for some constant, so we have at most linear growth. We can therefore define a function on $X$
$$
g(x) =\lim_{t\to \infty} u_t(x)/t;
$$
the convexity in $t$ implies that the limit exists. Being the limit of an essentially increasing sequence of functions that are $(1/t)\omega$-plurisubharmonic, the upper semicontinuous regularization of $g$, $g^*$,  is plurisubharmonic on $X$. Since $X$ is compact, $g^*$ is therefore constant, and it follows that $g$ itself is constant outside a pluripolar set. 

Now we recall the definitions from the introduction
$$
A_t:= \{\dot u_t(x); x\in X\}
$$
and
$$
B_x:=\{\dot u_t(x); -\infty <t<\infty\}.
$$
Since $X$ is connected they are both intervals, and we want to prove that for most $x$ they coincide,  after taking the closure of $B_x$. So, fix an $x_0$. Since the push-forward measure $\mu$ is independent of $t$ and $A_t$ is its support, $A_t=:A$ does not depend on $t$. Moreover, $\dot u_t(x_0)\in A_t=A$ for all $t$, so it follows that $B_{x_0}\subset A$.

For the reverse inclusion, let $x_0$ be such that  $g(x_0)=g^*(x_0)$. We use that for any $x$, $g(x)=\lim_{t\to\infty}\dot u_t(x)$, and $\dot u_t$ is increasing in $t$. Since $g\leq g^*=:C$ everywhere it follows that
$$
\dot u_t(x)\leq C=\lim_{t\to\infty }\dot u_t(x_0),
$$
and $\lim_{t\to\infty }\dot u_t(x_0)$ is the right endpoint of the interval $B_{x_0}$. Replacing the geodesic $u_t$ by $u_{-t}$, we find that $A\subset \overline{B_{x_0}}$, possibly after assuming that $x_0$ does not belong to another pluripolar set. This completes the proof of Theorem 1.1.

We next turn to the multidimensional case and the first step is a multidimensional variant of the 'Duistermaat-Heckman'-measure $\mu$ that we used above. We will consider functions $u_t(x)$ defined for $x$ in $X$ and for $t$ in a convex domain in $\R^k$. As before we sometimes extend them to $\tau=t+is \in \C^k$ by putting $u_\tau:=u_t$.  We say that $u_t$ is a multidimensional geodesic in $\H_\omega$ if $u_\tau$ is $\omega$-plurisubharmonic as a function of $(\tau,x)$ and  the restriction of $u_t$ to any line in $\R^k$ is a geodesic in $\H_\omega$.

\begin{thm} Let $u_t$ be a multidimensional geodesic in $\H_\omega$, of class $C^2$ in $t$. Let $f$ be a continuous function on $\R^k$, and denote by $\partial_tu_t$ the gradient of $u_t$ with respect to $t$.
Then the integrals
$$
\int_X f(\partial_t u_t) \omega_t^n/n!
$$
do not depend on $t$. In other words, the push-forward measures of $\omega_t^n/n!$ under the maps $\partial_t u_t$ do not depend on $t$ and give a well defined measure $d\mu$ on $\R^k$. 
\end{thm}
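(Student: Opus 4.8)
The plan is to imitate the one-dimensional argument sketched just above: show that the quantity $\int_X f(\partial_t u_t)\,\omega_t^n/n!$ is independent of $t$ by differentiating it along an arbitrary line in $\R^k$ and using that the restriction of $u_t$ to that line is a one-dimensional geodesic. By linearity and density it suffices to treat $f\in C^\infty_c(\R^k)$, or even $f$ of the form $f(y)=\hat f(\xi)e^{i\langle\xi,y\rangle}$, so the whole statement reduces to showing that $t\mapsto \int_X e^{i\langle\xi,\partial_t u_t\rangle}\,\omega_t^n/n!$ is constant for each fixed $\xi\in\R^k$. Fix a direction $v\in\R^k$; differentiating in the direction $v$ and writing $\partial_v$ for $\sum v_j\partial/\partial t_j$, the $t$-derivative of the integral is
$$
\int_X \Big( i\langle\xi,\partial_v\partial_t u_t\rangle e^{i\langle\xi,\partial_t u_t\rangle} \omega_t^n/n! + e^{i\langle\xi,\partial_t u_t\rangle}\, \partial_v(\omega_t^n/n!)\Big).
$$
Since $\partial_v(\omega_t^n/n!) = i\ddbar_x(\partial_v u_t)\wedge \omega_t^{n-1}/(n-1)!$, an integration by parts on the compact manifold $X$ moves the $\ddbar_x$ onto the exponential factor, and one is left with an identity that is exactly the statement that $t\mapsto \int_X f(\partial_v u_t)\,\omega_t^n/n!$ — or rather its analogue with the one-variable function $s\mapsto f(\partial_t u_{t_0+sv})$ evaluated at the right place — is constant along the line $t_0+\R v$. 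That last fact is precisely the one-dimensional result recalled at the start of Section 2 (the construction from \cite{B1}), applied to the geodesic $s\mapsto u_{t_0+sv}$ whose velocity is $\partial_v u_t = \langle v,\partial_t u_t\rangle$; so for the single function $y\mapsto g(\langle v,y\rangle)$ with $g$ one-variable continuous, the pushforward is line-independent.

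The actual content is therefore to upgrade "line by line" to "jointly in $\R^k$". Here the natural route is: for fixed $\xi$, the function $\Phi(t):=\int_X e^{i\langle\xi,\partial_t u_t\rangle}\,\omega_t^n/n!$ is $C^1$ in $t$ (using the $C^2$ hypothesis on $u_t$), and the computation above shows that its directional derivative $\partial_v\Phi(t)$ vanishes for every $v$ — because along the line through $t$ in direction $v$ the one-dimensional theorem gives constancy, hence a vanishing $s$-derivative at $s=0$. A function on a convex open set all of whose directional derivatives vanish is constant, so $\Phi$ is constant. Then Fourier inversion (or Stone–Weierstrass, after restricting to the compact range of $\partial_t u_t$, which is bounded by the $C^1$ regularity and compactness of $X$) gives constancy of $\int_X f(\partial_t u_t)\,\omega_t^n/n!$ for all continuous $f$, and this constancy is exactly the assertion that the pushforwards agree and define a single measure $d\mu$ on $\R^k$.

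I expect the main obstacle to be the justification of the integration by parts and the differentiation under the integral sign when $\omega_t$ is merely $\geq 0$ rather than strictly positive and $u_t$ is only $C^2$ in $t$ (not necessarily smooth in $x$). In the strictly positive smooth case everything is classical; in general one must either invoke the one-dimensional result from \cite{B1} (which is already stated there at this level of generality, for $C^1$ geodesics) as a black box and only do the reduction from $\R^k$ to lines — in which case the analytic difficulty is entirely outsourced — or approximate. The cleanest writeup keeps the analysis one-dimensional: reduce to exponentials $f(y)=e^{i\langle\xi,y\rangle}$, note $e^{i\langle\xi,\partial_t u_t\rangle}=e^{i\partial_v u_t}$ along the line with direction $v=\xi$, apply the one-dimensional invariance directly to get that $\Phi$ is constant on every line, and conclude by the elementary fact about directional derivatives. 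The $C^2$ (rather than $C^1$) hypothesis in $t$ is presumably there precisely to make $\Phi\in C^1$ and to let the "vanishing directional derivatives $\Rightarrow$ constant" step go through without fuss.
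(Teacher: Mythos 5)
Your first display---differentiating under the integral and integrating by parts on $X$---is the right move and is essentially the paper's proof, but the way you then close the argument has a genuine gap. After moving $\ddbar_x$ onto the exponential factor, the directional derivative $\partial_v\Phi_\xi(t)$ becomes
$$
i\int_X \sum_j \xi_j\Bigl(\partial_v\dot u_j-\bigl\langle\dbar\dot u_j,\dbar(\partial_vu_t)\bigr\rangle_t\Bigr)e^{i\langle\xi,\partial_tu_t\rangle}\,\omega_t^n/n!,
$$
and for this to vanish for \emph{all} $v$ and $\xi$ you need the \emph{polarized} geodesic equation $\ddot u_{ij}=\langle\dbar\dot u_i,\dbar\dot u_j\rangle_t$ for all pairs $(i,j)$. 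You never state this; instead you claim the vanishing follows from the one-dimensional theorem applied to the line through $t$ in direction $v$. That is false when $v$ is not parallel to $\xi$: the one-dimensional result for the geodesic $s\mapsto u_{t+sv}$ only controls pushforwards under the scalar velocity $\langle v,\partial_tu_t\rangle$, i.e.\ integrals of ridge functions $g(\langle v,y\rangle)$, and says nothing about how $\int_Xe^{i\langle\xi,\partial_tu_t\rangle}\,\omega_t^n$ varies as $t$ moves in a direction $v$ transverse to $\xi$. Put measure-theoretically: for each pair of points $t_0,t_1$ the black-box one-dimensional theorem only tells you that the projections of $\mu_{t_0}$ and $\mu_{t_1}$ onto the \emph{single} direction $t_1-t_0$ agree, and one matching projection per pair is nowhere near enough to conclude $\mu_{t_0}=\mu_{t_1}$ (a Cram\'er--Wold argument would need all directions for a fixed pair). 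So the ``cleanest writeup'' you propose at the end, which outsources everything to the one-dimensional result, does not go through.

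The fix is small and is exactly what the paper does: since the restriction of $u_t$ to \emph{every} line is a geodesic, the diagonal identity $\ddot u_{vv}=|\dbar(\partial_vu_t)|_t^2$ holds for every direction $v$, and polarizing over $v$ yields the mixed identity $\ddot u_{ij}=\langle\dbar\dot u_i,\dbar\dot u_j\rangle_t$ for all $i,j$ (its real part, which is all the real integrand sees). With that in hand your integration by parts closes for every $v$ and $\xi$---indeed for every $C^1$ function $f$ directly, so the Fourier/Stone--Weierstrass reduction is unnecessary---and the remaining steps of your argument (vanishing directional derivatives imply constancy on a convex domain; approximation of continuous $f$ on the compact range of $\partial_tu_t$) are fine.
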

The proof is essentially the same as in one variable and consists simply in differentiating with respect to $t$ - we may of course assume that $f$ is of class $C^1$ in the proof. Recall that the geodesic equation in one variable can be written
$$
\ddot u_{t t} -|\dbar \dot u_t|_t^2 =0,
$$
where $\ddot u_{t t}$ is the second derivative and $|\cdot|_t $ means the (pointwise) norm with respect to the K\"ahler metric $\omega_t$. This means that if in the multidimensional case $u_t$ is geodesic along each line, then
$$
\ddot u_{i j} -\langle \dbar \dot u_i, \dbar \dot u_j\rangle_t=0
$$
for all $(i j)$, where we omit the $t$ in the subscripts to make room for the variables with respect to which we differentiate. Differentiating the integral with respect to $t_i$ we get
$$
\int_X \sum_j f'_j\ddot u_{i j} \omega_t^n/n! +\int_X fi\ddbar\dot u_i\wedge \omega_t^{n-1}/(n-1)!.
$$
Applying Stokes' theorem to the second integral we see that this vanishes, which completes the proof of Theorem 2.1.

Now we define the sets $A_t$ and $B_x$ as before, replacing  $\dot u_t$ by the gradient with respect to $t$.
\be
A_t:= \{\partial_t u_t(x); x\in X\}
\ee
and
\be
B_x:=\{\partial_t u_t(x); -\infty <t<\infty\}.
\ee

 Exactly as in one variable it follows from Theorem 2.1 that the sets $A_t=:A$ are all identical since they are equal to the support of $d\mu$, and that the closure of  $B_x$ is always a subset of $A$. Moreover, the closure of $B_x$ is always convex, since $B_x$ is the gradient image of a convex function. This is a standard fact that can be found e.g. in \cite{Gromov}. What remains is to prove that the closure of $B_x$ fills out all of $A$. 

For this we consider points $\lambda$ on the unit sphere in $\R^k$ and the geodesics
$$
s\to \phi_{s\lambda}=: \Phi^{\lambda}_s.
$$
The derivative  is
$$
\dot\Phi^\lambda=\lambda\cdot\partial_t\phi|_{s\lambda}.
$$
Then 
$$
B^\lambda_x=\{\dot\Phi^\lambda_s(x), s\in\R\}=\{\lambda\cdot\xi;\xi\in B_x\},
$$
and
$$
A^\lambda_s=\{\dot\Phi^\lambda_s(x), x\in X\}.
$$
By the result in one variable we have $A^\lambda_t=A^\lambda=B^\lambda_x$ for all $x$ outside some pluripolar set, depending on $\lambda$. This means precisely that the orthogonal projection of $A$ onto the line $\{s\lambda\}$ equals the projection of $B_x$. If we restrict attention to a countable dense set of $\lambda$'s this holds for $x$ outside of a single pluripolar set in $X$.  This is enough to show that $A$ is included in the closure of  $B_x$ for all $x$ outside this set. Indeed,  if there were a point of $A$ outside $\overline{B_x}$ , we could find a hyperplane separating  $\overline{B_x}$ from that point (here we are using that $\overline{B_x}$ is convex). 
We can also assume this hyperplane is defined by a point $\lambda$ in the countable dense set, and then get a contradiction to the one-dimensional result.

Summing up we have proved

\begin{thm} Let $u_t$ be a multidimensional geodesic in $\H_\omega$, defined for $t$ in all of $\R^k$. Then the sets $A_t$ defined in (2.1) are all equal; $A_t=:A$, and they coincide with the closure of $B_x$ for all $x$ outside a pluripolar set. As a consequence, the set $A$ is convex.
\end{thm}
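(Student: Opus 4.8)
The plan is to deduce the theorem from Theorem 2.1 and the one-variable Theorem 1.1, upgrading to several variables the argument that followed Theorem 1.1, with the Hahn-Banach theorem supplying the new ingredient.

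First I would observe that the push-forward measure $d\mu$ furnished by Theorem 2.1 does not depend on $t$: since each $u_t\in\H_\omega$, the form $\omega_t^n/n!$ is a smooth positive volume form on the compact manifold $X$ and hence has full support, so the support of $d\mu$ equals $\partial_t u_t(X)=A_t$. Therefore $A_t=:A$ is one and the same (compact) set for every $t$, and because $\partial_t u_t(x)\in A_t=A$ for every $t$ we get $B_x\subset A$, hence $\overline{B_x}\subset A$, for every $x$. Next, for fixed $x$ the function $t\mapsto u_t(x)$ is convex on $\R^k$, since its restriction to each real line is a one-variable geodesic and therefore convex; thus $B_x$ is the gradient image of a convex function, and $\overline{B_x}$ is convex by the standard fact recalled above, see \cite{Gromov}.

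The substantive point is the reverse inclusion $A\subset\overline{B_x}$ for $x$ outside a pluripolar set, and here I would argue by contradiction and reduce to the one-dimensional theorem. Fix once and for all a countable dense subset $\Lambda$ of the unit sphere of $\R^k$. For $\lambda\in\Lambda$ the curve $s\mapsto u_{s\lambda}$, $s\in\R$, is a one-variable geodesic defined for all time, of class $C^1$ and strictly $\omega$-plurisubharmonic for each $s$, with velocity $\dot u_{s\lambda}(x)=\lambda\cdot\partial_t u_t(x)\big|_{t=s\lambda}$; applying Theorem 1.1 to it gives that the one-dimensional sets $A^\lambda:=\{\lambda\cdot\xi:\xi\in A\}$ and $\overline{B^\lambda_x}$ coincide for all $x$ outside a pluripolar set $P_\lambda$, where $B^\lambda_x\subset\{\lambda\cdot\xi:\xi\in B_x\}$. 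The union $P:=\bigcup_{\lambda\in\Lambda}P_\lambda$ is again pluripolar; fix $x\notin P$. If some $a\in A$ were not in $\overline{B_x}$, then, $\overline{B_x}$ being closed and convex, the Hahn-Banach theorem produces a hyperplane strictly separating $a$ from $\overline{B_x}$, and, slightly perturbing, we may take its normal to be some $\lambda\in\Lambda$; but then
$$
\lambda\cdot a>\sup_{\xi\in B_x}\lambda\cdot\xi\ \ge\ \sup B^\lambda_x=\sup\overline{B^\lambda_x}=\sup A^\lambda\ \ge\ \lambda\cdot a,
$$
a contradiction. Hence $A\subset\overline{B_x}$, so $A=\overline{B_x}$, which is convex by the previous paragraph; this yields all the assertions of the theorem.

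The step I expect to require the most care is exactly this reduction: one has to arrange that a single pluripolar exceptional set works in every direction, which is why the countable dense family $\Lambda$ is used together with the fact that a countable union of pluripolar sets is pluripolar, and one has to check that the hypotheses of Theorem 1.1 --- defined for all time, $C^1$ in the parameter, strictly $\omega$-plurisubharmonic for each parameter value --- genuinely descend from the multidimensional geodesic to each of its lines, which is where one uses that $u_t$ is of class $C^2$ in $t$ (as in Theorem 2.1) and that each $u_t$ lies in $\H_\omega$. The remaining ingredients are routine: that the support of the push-forward of a full-support measure under a continuous map on a compact space is the image of that space, and the elementary separation argument.
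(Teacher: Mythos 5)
Your proposal is correct and follows essentially the same route as the paper: identify $A_t$ with the support of the $t$-independent push-forward measure from Theorem 2.1, get convexity of $\overline{B_x}$ as the gradient image of a convex function, and then recover $A\subset\overline{B_x}$ by applying the one-variable Theorem 1.1 to the restrictions along a countable dense set of directions $\lambda$, taking the union of the exceptional pluripolar sets, and concluding with a separating-hyperplane argument. The only cosmetic difference is that you state the inclusion $B^\lambda_x\subset\{\lambda\cdot\xi:\xi\in B_x\}$ where the paper asserts equality; your weaker inclusion is all the chain of inequalities needs, so this is if anything slightly more careful.
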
 

\section{ Holomorphic vector fields that induce geodesics.}

Let $V$ be a (nontrivial) holomorphic vector field on $X$, and let $F_\tau$ be its flow for (complex) time $\tau$. Put $\omega_\tau = F_\tau^*(\omega)$. The question we discuss in this section is when there is a complex geodesic $u_\tau$ in $\H_\omega$ such that
$$
\omega_\tau=i\ddbar u_\tau +\omega.
$$

We first note that since $F_\tau$ is homotopic to the identity, $\omega_\tau$ is always cohomologous to $\omega$ in $H^{1,1}(X)$. By the $\ddbar$-lemma for K\"ahler manifolds this implies that we can always solve, for each $\tau$
\be
i\ddbar u_\tau =\omega_\tau -\omega,
\ee
and it is easy to see that we can make $u_\tau$ depend smoothly on $\tau$. The solutions $u_\tau$ are uniquely determined up to constants for each $\tau$, so as a function of $\tau$ we have uniqueness up to the addition of a function $f(\tau)$. The question is if this function can be chosen so that $u_\tau$ is a geodesic, i. e. that it is $\omega$-plurisubharmonic on $\C\times X$ and satisfies
$$
(i\ddbar_{\tau x} u_\tau +\omega)^{n+1}=0.
$$
Let $u_\tau$ be an arbitrary solution to (3.1), and let 
$$
\Omega:= i\ddbar_{\tau x} u_\tau +\omega.
$$
Let $\alpha_\tau:= \dbar_x\dot u_\tau$, where $\dot u_\tau=\partial u_\tau/\partial \tau$. This is a $(0,1)$-form on $X$ for each $\tau$ and it depends smoothly on $\tau$. A direct computation shows that
\be
\Omega =\omega_\tau +\ddot u_{\tau \bar\tau} id\tau\wedge d\bar\tau + i(\bar\alpha_\tau\wedge d\bar\tau +d\tau\wedge \alpha_\tau),
\ee
and
$$
\Omega^{n+1}/(n+1)!=(\ddot u_{\tau \bar\tau} -|\dbar_x \dot u_\tau|^2_\tau) id\tau\wedge d\bar\tau\wedge\omega_\tau^n/n!.
$$
Here $\ddot u_{\tau \bar\tau} =\partial^2 u_\tau/\partial \tau\partial\bar\tau$. We will use the simplifying notation
$$
\ddot u_{\tau \bar\tau} -|\dbar_x \dot u_\tau|^2_\tau=:c(u_\tau).
$$
From this formula we see that if we change $u_\tau$ by adding a function $f(\tau)$, we change $c(u)$ by adding $f''(\tau)$. Hence we are able to choose $u_\tau$ so that it satisfies the HCMAE if and only if $c(u_\tau)$ depends only on $\tau$, or equivalently if $\dbar_xc(u_\tau)=0$. We will prove
\begin{thm} Assume the holomorphic vector field $V$ satisfies
\be
V\rfloor\omega=i\dbar h
\ee
for some function $h$. Then there is a geodesic $u_\tau$ such that
$$
F_\tau^*(\omega)=i\ddbar u_\tau +\omega.
$$
\end{thm}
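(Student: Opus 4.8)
The plan is to use the reduction just carried out: for a solution $u_\tau$ of $i\ddbar u_\tau=\omega_\tau-\omega$ depending smoothly on $\tau$ (these exist by the remarks preceding the theorem), it suffices to check that $c(u_\tau)=\ddot u_{\tau\bar\tau}-|\dbar_x\dot u_\tau|^2_\tau$ depends only on $\tau$; one then corrects $u_\tau$ by a function of $\tau$ to arrange $c(u_\tau)\equiv 0$, i.e. $\Omega^{n+1}=0$ for $\Omega=i\ddbar_{\tau x}u_\tau+\omega$, and the inequality $\Omega\ge 0$ is automatic because the restriction of $\Omega$ to each slice $\{\tau\}\times X$ is $F_\tau^*\omega>0$ while $\Omega^{n+1}=0$ forces $\det\Omega=0$ pointwise, and a Hermitian form that is positive on a hyperplane and singular is positive semidefinite. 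So the whole theorem comes down to computing $c(u_\tau)$.

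The first step is the identity
$$
\partial_\tau\omega_\tau=i\ddbar_x(h\circ F_\tau).
$$
Here one uses that, since $F_\tau$ is the flow of $V$ and $\tau\mapsto F_\tau$ is holomorphic (so that $F_\tau$ is holomorphic in $x$ and $\tau$ and $\partial_{\bar\tau}F_\tau=0$), a direct computation in local coordinates gives $\partial_\tau\omega_\tau=F_\tau^*(\mathcal{L}_V\omega)$; then Cartan's formula together with $d\omega=0$ and the hypothesis $V\rfloor\omega=i\dbar h$ yields $\mathcal{L}_V\omega=d(V\rfloor\omega)=d(i\dbar h)=i\ddbar h$; and finally the holomorphic map $F_\tau$ commutes with $\partial$ and $\dbar$, so $F_\tau^*(i\ddbar h)=i\ddbar_x(h\circ F_\tau)$. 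Differentiating $i\ddbar u_\tau=\omega_\tau-\omega$ in $\tau$ and using compactness of $X$ then gives
$$
\dot u_\tau=h\circ F_\tau+f_1(\tau)
$$
for some smooth function $f_1$ of $\tau$ alone.

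The second step is to substitute this into $c(u_\tau)$. Since $F_\tau$ is holomorphic, $\dbar_x\dot u_\tau=\dbar_x(h\circ F_\tau)=F_\tau^*(\dbar h)$, and pull-back by the biholomorphism $F_\tau$ is an isometry between $(0,1)$-forms measured with $\omega$ and with $F_\tau^*\omega$, so $|\dbar_x\dot u_\tau|^2_\tau=|\dbar h|^2_\omega\circ F_\tau$. On the other hand $\ddot u_{\tau\bar\tau}=\partial_{\bar\tau}\dot u_\tau=\partial_{\bar\tau}(h\circ F_\tau)+\partial_{\bar\tau}f_1(\tau)$, and the point is that
$$
\partial_{\bar\tau}(h\circ F_\tau)=|\dbar h|^2_\omega\circ F_\tau .
$$
Indeed, as $F_\tau$ is holomorphic in $\tau$ the chain rule gives $\partial_{\bar\tau}(h\circ F_\tau)=(\overline V h)\circ F_\tau$, the derivative of $h$ along the antiholomorphic field $\overline V$; writing $V\rfloor\omega=i\dbar h$ in coordinates as $\partial_{\bar a}h=\sum_b g_{b\bar a}V^b$ one sees $\overline V h=|V|^2_\omega$, and likewise $|\dbar h|^2_\omega=|V\rfloor\omega|^2_\omega=|V|^2_\omega$. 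Subtracting, the two copies of $|V|^2_\omega\circ F_\tau$ cancel and
$$
c(u_\tau)=\partial_{\bar\tau}f_1(\tau),
$$
which is independent of $x$. By the reduction above, $u_\tau$ can then be corrected to a genuine complex geodesic with $F_\tau^*\omega=i\ddbar u_\tau+\omega$, proving the theorem.

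I expect the only real difficulty to be the bookkeeping in the two displayed identities — the holomorphic-time Lie-derivative formula $\partial_\tau\omega_\tau=F_\tau^*(\mathcal{L}_V\omega)$ and the cancellation $\partial_{\bar\tau}(h\circ F_\tau)=|\dbar_x\dot u_\tau|^2_\tau$. Conceptually, the hypothesis $V\rfloor\omega=i\dbar h$ is precisely what makes $\dot u_\tau$ equal, up to a function of $\tau$, to the flowed-back potential $h\circ F_\tau$ of the contraction $V\rfloor\omega$, and this is the only place the cohomological condition is used; without it one only gets that $c(u_\tau)$ is $\dbar_x$-exact, not $x$-independent. One could instead put $e(\tau,x)=F_\tau(x)$, note that $e^*\omega$ is a closed semipositive $(1,1)$-form on $\C\times X$ with $(e^*\omega)^{n+1}=e^*(\omega^{n+1})=0$ and cohomology class $[\pi^*\omega]$, where $\pi$ is the projection, and then solve $e^*\omega-\pi^*\omega=i\ddbar u_\tau$; but writing the left side as $i\ddbar$ of a function rather than merely $d$ of a $1$-form again requires exactly the hypothesis $V\rfloor\omega=i\dbar h$, so this is not really a shortcut.
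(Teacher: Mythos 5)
Your proof is correct and follows essentially the same route as the paper: Cartan's formula plus the hypothesis $V\rfloor\omega=i\dbar h$ gives $\dot u_\tau=h\circ F_\tau$ up to a function of $\tau$, and the identity $\bar Vh=|V|^2_\omega=|\dbar h|^2_\omega$ makes the two terms of $c(u_\tau)$ cancel, exactly as in the paper's computation $V\rfloor\partial\dot u_{\bar\tau}=|V|^2_\tau=|\dbar\dot u_\tau|^2_\tau$. The only cosmetic difference is that you compute $c(u_\tau)$ explicitly from $h\circ F_\tau$ whereas the paper shows $\dbar_x c(u_\tau)=0$ by differentiating $V\rfloor\omega_\tau=i\dbar\dot u_\tau$ in $\bar\tau$; your added remark on why $\Omega\ge 0$ follows from $\Omega^{n+1}=0$ and positivity on the slices is a correct filling-in of a point the paper leaves implicit.
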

\begin{proof}
Since $V$ is invariant under the flow $F_\tau$, it follows from (3.3) that 
\be
V\rfloor\omega_\tau=i\dbar h\circ F_\tau.
\ee
On the other hand, by Cartan's 'magic formula' for the Lie derivative
$$
\partial V\rfloor\omega_\tau= \frac{\partial}{\partial \tau}\omega_\tau=i\ddbar\dot u_\tau.
$$
Hence
$$
\ddbar_x(h\circ F_\tau -\dot u_\tau)=0,
$$
so $h\circ F_\tau -\dot u_\tau$ is constant on $X$ for each $\tau$. By (3.4) this gives
\be
V\rfloor\omega_\tau=i\dbar \dot u_\tau.
\ee
Next we apply $\partial/\partial \bar \tau$ to this and get
$$
V\rfloor i\ddbar\dot u_{\bar\tau}=i\dbar\ddot u_{\tau \bar\tau}.
$$
This gives that 
\be
\dbar(V\rfloor\partial\dot u_{\bar\tau} -\ddot u_{\tau \bar\tau})=0.
\ee
But taking the conjugate of (3.5) we get that 
$$
V\rfloor\partial\dot u_{\bar\tau}=V\rfloor\bar V\rfloor\omega_\tau=|V|_\tau^2=|\dbar\dot u_\tau|^2_\tau.
$$
Hence (3.5) says precisely that $\dbar c(u_\tau)=0$, so we are done.
\end{proof}

{\bf Remark:} If condition (3.3) is satisfied for a certain $\omega$, it also holds for any other K\"ahler form $\omega'=\omega+i\ddbar v$ in the same cohomology class, with $h$ replaced by $h+V(v)$. In particular it also holds for $G^*(\omega)$ where $G$ is the time-1 flow of any holomorphic vector field. \qed

Let us note  that even though the geodesic is not unique, it can be chosen in a canonical way. It is clear that two geodesics differ only by a function of $\tau$ and this function has to be linear. Now consider the Aubin-Yau energy of $u_\tau$,
$$
\E(u_\tau)
$$
defined by $\E(u_0)=0$ and 
$$
\partial\E(u_\tau)/\partial \tau=\int_X \dot u_\tau \omega_\tau^n/n!.
$$
It is classical that this function is linear along geodesics (this follows also from the one-dimensional case of Theorem 2.1, since this implies that the derivative is constant), so by subtracting a suitable linear function of $\tau$ from the geodesic we can make it vanish. This is the 'canonical' choice, and it has the merit of removing all ambiguity from the definition. Theorem 3.1 can be restated as saying that if $V$ satisfies condition (3.3), and for each $\tau$, $u_\tau$ solves equation (3.1) and has energy zero, then $u_\tau$ is a geodesic.

We also remark that the cohomological condition (3.3) is not necessary for a geodesic to exist. A simple example is when $X$ is the standard torus; the quotient of $\C$ by the integer lattice $\Z^2$. Take $\omega$ to be $idz\wedge d\bar z$ on the torus, and $V=\partial/\partial z$. Then $V\rfloor\omega =i d\bar z$, which is not $\dbar$-exact on the torus. But,  $\omega_\tau=\omega$ for all $\tau$, so $u_\tau=0$ is a geodesic satisfying our conditions. 

On the other hand, a geodesic as in Theorem 3.1 does not always exist. For this we take the same torus as above and the same vector field, but choose instead
$$
\omega= \xi(x)idz\wedge d\bar z.
$$
Then $\omega_\tau=\omega_t$ is independent of the imaginary part of $\tau$ and periodic in $t$, but not constant, if $\xi$ is not constant, so there can be no geodesic. Indeed, the 'canonical' geodesic would then also be periodic in $t$, which by convexity would imply that it is constant in $t$. This example was communicated to me by Tam\'as Darvas.

Assume next that the imaginary part of $V$ is Hamiltonian for the symplectic form $\omega$, i. e. that
$$
(V- \bar V)\rfloor \omega = id H,
$$
for a real valued function $H$; the Hamiltonian. Identifying terms of the same bidegree, we see that $V\rfloor \omega=i\dbar H$, so $V$ satisfies the condition in Theorem 3.1 and therefore induces a geodesic, $u_\tau$. In the course of the proof of Theorem 3.1 we saw also that
$$
V\rfloor\omega=i\dbar \dot u_0.
$$
Hence  $\dot u_0$ is a Hamiltonian for the imaginary part of $V$ and the symplectic form $\omega$, and by (3.5), $\dot u_t$ is a Hamiltonian  for $\Im(V)$ for the symplectic form $\omega_t$.

We next turn to multidimensional geodesics. If $V$ is any holomorphic vector field on $X$, we denote by $F^V\in Aut(X)$ its time-1 flow. It is clear that if $V$ and $W$ are commuting holomorphic fields, then 
$$
F^{V+W}=F^V\circ F^W,
$$
since $F^{t(V+W)}$ and $F^{tV}\circ F^{tW}$ satisfy the same ODE. Let $V_1, ...V_k$ be commuting holomorphic vector fields satisfying the cohomological condition of Theorem 3.1. Put
$$
F^{(\tau_1, ...\tau_k)}:= F^{\tau_1V_1+...\tau_k V_k},
$$
$$
\omega_{(\tau_1, ...\tau_k)}:= (F^{(\tau_1, ...\tau_k)})^*(\omega).
$$
and solve
$$
i\ddbar u_{(\tau_1, ...\tau_k)}=\omega_{(\tau_1, ...\tau_k)}-\omega
$$
with $u_{(\tau_1, ...\tau_k)}$ having energy zero. We claim that the restriction of $u_{(\tau_1, ...\tau_k)}$
to any complex line is a complex geodesic. Indeed, this is clear since if the line is $\{a+\tau b\}$ then 
$$
\omega_{a+\tau b}= (F^{\tau b})^* (F^{a})^*(\omega)
$$
satisfies the hypothesis of Theorem 3.1 (by the remark immediately after its proof). In particular, this means that if $a$ and $b$ lie in $\R^k$, then $u_{a+tb}$ is a real geodesic if the imaginary parts of $V_j$ are all Hamiltonian. In other words, $u_{t_1, ...t_k}$ is a multidimensional geodesic.

Moreover its gradient satisfies
$$
\partial_t u_t|_0=: (H_1, ... H_k),
$$
where $H_i$ is a Hamiltonian for $V_i$. In other words, the gradient of $u_t$ at $t=0$ is a moment map for $V$. Thus Theorem 2.2 has the following corollary, which is a special case of the results of Atiyah, \cite{Atiyah}, and Guillemin-Sternberg, \cite{Guillemin-Sternberg}.
\begin{cor} Let $V=(V_1, ...V_k)$ be a $k$-tuple of commuting holomorphic vector fields, whose imaginary parts are Hamiltonian for the symplectic (K\"ahler) form $\omega$. Then the image of its moment map is a convex subset of $\R^k$.
\end{cor}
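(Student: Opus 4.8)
The plan is to deduce the corollary directly from Theorem 2.2 via the construction of multidimensional geodesics attached to commuting vector fields described just above. First I would verify that the $k$-tuple $V=(V_1,\dots,V_k)$ genuinely produces a multidimensional geodesic defined on all of $\R^k$. Since the $V_j$ commute and each has Hamiltonian imaginary part, we have $V_j\rfloor\omega=i\dbar H_j$, so each $V_j$ satisfies the cohomological hypothesis of Theorem 3.1; by the discussion preceding the corollary, the (energy zero) solution $u_{(\tau_1,\dots,\tau_k)}$ of $i\ddbar u_{(\tau_1,\dots,\tau_k)}=\omega_{(\tau_1,\dots,\tau_k)}-\omega$ restricts to a complex geodesic on every complex line, and, because the $\Im V_j$ are Hamiltonian, $\omega_{(\tau_1,\dots,\tau_k)}$ is independent of $\Im\tau$, so $u_t:=u_\tau$ depends only on $t=\Re\tau$ and restricts to a real geodesic on every real line. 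Since $X$ is compact, each holomorphic vector field $\tau_1V_1+\dots+\tau_kV_k$ is complete, so the flow $F^{(\tau_1,\dots,\tau_k)}$, and hence $u_t$, is defined for all $\tau\in\C^k$, in particular for all $t\in\R^k$; moreover $u_t$ is smooth in $t$, so all the (explicit and implicit) regularity hypotheses of Theorems 2.1 and 2.2 hold.

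Next I would identify the image of the moment map with the set $A_0$ of (2.1). The moment map of the Hamiltonian $\R^k$-action generated by $\Im V_1,\dots,\Im V_k$ is $x\mapsto(H_1(x),\dots,H_k(x))$ (unique up to an additive constant, since the Lie algebra is abelian, so convexity of the image is a well-posed statement). Applying the identity $V\rfloor\omega=i\dbar\dot u_0$ obtained in the proof of Theorem 3.1 coordinate by coordinate — exactly as in the paragraph just before the corollary — gives $\partial_t u_t(x)\big|_{t=0}=(H_1(x),\dots,H_k(x))$. Hence $A_0=\{\partial_t u_t(x)|_{t=0};\,x\in X\}$ is precisely the image of the moment map; it is moreover compact, $X$ being compact and the $H_j$ continuous.

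Finally, apply Theorem 2.2 to the multidimensional geodesic $u_t$: the sets $A_t$ are all equal to a common set $A$, and $A$ is convex. In particular $A=A_0$ is convex, which is the assertion of the corollary.

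I do not expect a genuine obstacle: the substantive work is already contained in Theorem 2.2 (the $t$-independence of the Duistermaat--Heckman measure, the Darvas bound giving linear growth, the pluripolar-set/limit argument, and the Hahn--Banach separation). The only points that need care are bookkeeping: checking that the geodesic built from $V$ really lives on all of $\R^k$ (completeness of the $V_j$, from compactness of $X$) and is smooth enough to feed into the earlier theorems, and checking the coordinatewise identification of $\partial_t u_t|_0$ with the moment map $(H_1,\dots,H_k)$.
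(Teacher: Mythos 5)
Your proposal is correct and follows essentially the same route as the paper: construct the multidimensional geodesic $u_t$ from the commuting flows via Theorem 3.1 and the remark after it, identify $\partial_t u_t|_{t=0}$ with the moment map $(H_1,\dots,H_k)$, and invoke Theorem 2.2 to conclude that $A=A_0$ is convex. The extra bookkeeping you supply (completeness of the flows on the compact $X$, regularity of $u_t$, well-posedness of the moment map up to constants) is consistent with, and slightly more explicit than, the paper's discussion preceding the corollary.
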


\section{ Geodesics that define holomorphic vector fields}
The main objective of this section is to prove
\begin{thm} Let $u_\tau$ be a regular geodesic in $\H_\omega$ defined for all $\tau\in \C$. Assume moreover that $u_\tau$ does not depend on $\Im(\tau)$.  Then there is a holomorphic vector field on $X$ that induces $u_\tau$ in the sense of the previous section.
\end{thm}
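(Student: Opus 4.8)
The plan is to study the Mabuchi K-energy $K$ along the geodesic. Write $\omega_t=\omega+i\ddbar u_t$ and $\psi(t):=\frac{d^2}{dt^2}K(u_t)$. Two ingredients enter. First, $K$ is convex along regular geodesics, so $\psi\ge 0$, and one has the classical second-variation formula $\psi(t)=\int_X|\mathcal D_{\omega_t}\dot u_t|^2\,\omega_t^n/n!$, where $\mathcal D_{\omega_t}$ is the Lichnerowicz operator; hence $\psi(t)=0$ precisely when the $(1,0)$-gradient $V_t:=\nabla^{1,0}_{\omega_t}\dot u_t$ of $\dot u_t$ is a holomorphic vector field on $X$. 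In particular $\psi\equiv 0$ makes $V_t$ holomorphic for every $t$. Second -- the analytic core, which I will state separately as Theorem 4.2 -- \emph{unless} $\psi\equiv 0$, the conformal metric $\psi(t)\,d\tau\otimes d\bar\tau$ on the strip carrying the geodesic has Gauss curvature bounded above by a strictly negative constant $-c$ depending only on $n$ and $\mathrm{Vol}(X)$.

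Granting this, the theorem follows at once. Since $u_\tau$ is defined for all $\tau\in\C$ the strip is the whole plane, so if $\psi\not\equiv 0$ we would get a conformal metric on $\C$ of curvature $\le -c<0$; this is impossible by the Ahlfors-Schwarz lemma, since comparison with the Poincar\'e metrics of the discs $\{|\tau|<R\}$, whose density at the origin tends to $0$ as $R\to\infty$, would force $\psi(0)=0$. (There is no difficulty from $\psi$ vanishing at isolated points: Theorem 4.2 gives the curvature bound on the whole domain, so when $\psi\not\equiv 0$ the metric is non-degenerate everywhere.) Therefore $\psi\equiv 0$.

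It remains to manufacture, out of $\psi\equiv 0$, a holomorphic vector field inducing $u_\tau$. Since $u_\tau$ is regular and strictly $\omega$-plurisubharmonic in $\tau$, the form $\Omega=i\ddbar_{\tau x}u_\tau+\omega$ on $\C\times X$ has rank exactly $n$ and its Monge-Amp\`ere foliation is a smooth holomorphic foliation transverse to the slices $\{\tau\}\times X$; from (3.2) its leaf direction at $(\tau,x)$ is $\partial/\partial\tau-V_\tau(x)$. Since $X$ is compact the leaf through $(0,x)$ is a global graph $\tau\mapsto(\tau,\gamma_x(\tau))$ with $\gamma_x\colon\C\to X$ holomorphic, solving $\partial_\tau\gamma_x=-V_{\Re\tau}(\gamma_x)$. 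Because the leaves are holomorphic and, by $\psi\equiv 0$, each $V_t$ is holomorphic on $X$, differentiating this relation in $\bar\tau$ forces $\partial_tV_t=0$ along every leaf, hence on all of $X$ since the leaves cover $X$ for each fixed $\tau$. Thus $V:=V_0$ is a holomorphic vector field and the foliation is the graph of its flow $F_\tau$; moreover one reads off $V\rfloor\omega_\tau=i\dbar\dot u_\tau$, which is exactly identity (3.5), and running Cartan's formula as in the proof of Theorem 3.1 yields $\omega_\tau=F_\tau^*\omega$. So $V$ induces $u_\tau$, and since $u_\tau$ is independent of $\Im\tau$ its imaginary part is Hamiltonian, so $V$ is of the kind studied in Section 3.

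The main obstacle is Theorem 4.2 -- the \emph{uniform} negative upper bound $-c(n,\mathrm{Vol}(X))$ on the curvature of the K-energy metric $\psi\,d\tau\otimes d\bar\tau$ when $\psi\not\equiv 0$. This is where Dan Burns's structure theorem for Monge-Amp\`ere foliations is used, to keep the foliation under enough analytic control to compute and estimate curvatures; as noted in the introduction, Theorem 4.2 also follows from the direct curvature estimate of Wan and Wang~\cite{Wan-Wang}. The other two steps are comparatively routine: $\psi\equiv 0\Rightarrow V_t$ holomorphic is the classical Hessian computation for $K$, and the extraction of the holomorphic vector field from $\psi\equiv 0$ is the bookkeeping with the Monge-Amp\`ere foliation sketched above.
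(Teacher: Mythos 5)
Your proposal is correct and follows essentially the same route as the paper: reduce everything to Theorem 4.2 (the uniform negative curvature bound on the K-energy metric, via Burns' theorem on Monge-Amp\`ere foliations), conclude $\ddot K\equiv 0$ from the nonexistence of negatively curved conformal metrics on all of $\C$, and then invoke the classical fact that harmonicity of the K-energy along a regular geodesic produces a holomorphic vector field, which is $\tau$-independent because the geodesic is independent of $\Im\tau$. The only cosmetic difference is that you sketch the last step (Lichnerowicz operator, foliation bookkeeping) where the paper simply cites Donaldson and \cite{B2}.
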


In the proof of the theorem we will use the Mabuchi K-energy, $K: \H_\omega \to \R$, see \cite{2Mabuchi}. It is a classical fact that if $u_\tau$ is a regular complex geodesic, then $K(u_\tau)$ is a subharmonic function of $\tau$ (this was later shown to also hold for generalized geodesics with enough regularity in \cite{BB} and \cite{CLP}). Moreover, if $K$ is harmonic along a regular geodesic, then the geodesic is induced by a holomorphic vector field, \cite{Donaldson}.  This vector field may be time dependent, i. e. its coefficients may depend on $\tau$, but it is a holomorphic function of $\tau$, see e.g. \cite{B2}. If the geodesic is independent of the imaginary part of $\tau$, the same thing goes for the vector field, which must therefore be independent of $\tau$, and induce the geodesic in the sense of the previous section. 

\begin{thm} Let $u_\tau$ be a regular geodesic defined for $\tau$ in a domain $U$ in $\C$.  Let
$$
\Theta:= \frac{\partial^2}{\partial\tau\partial\bar\tau} K(u_\tau) id\tau\wedge d\bar\tau,
$$
where $K$ is the Mabuchi K-energy. Then either $\Theta$ is identically equal to zero, or $\Theta$ defines a (possibly singular) metric on $U$ with negative curvature, bounded from above by $-(2/n V(X))$ where  $V(X)$ is the volume of $X$ for the K\"ahler metric $\omega$ .
\end{thm}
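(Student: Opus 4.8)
The plan is to combine the classical second variation formula for the Mabuchi functional along geodesics with a curvature computation for the naturally associated $L^{2}$-bundle over $U$.

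First I would invoke the well known second variation formula: along a regular geodesic one has
\be
\phi(\tau):=\frac{\partial^{2}}{\partial\tau\partial\bar\tau}K(u_\tau)=\int_{X}|\dbar(\nabla^{1,0}_{\tau}\dot u_\tau)|^{2}_{\tau}\;\omega_\tau^{n}/n!
\ee
up to a fixed positive normalization, where $\nabla^{1,0}_{\tau}$ denotes the $(1,0)$-part of the $\omega_\tau$-gradient. Writing $\mathcal D_{\tau}:=\dbar\circ\nabla^{1,0}_{\tau}$ for the Lichnerowicz operator and $\eta_\tau:=\mathcal D_{\tau}\dot u_\tau$, a $T^{1,0}X$-valued $(0,1)$-form on $X$ depending on $\tau$, this reads $\phi=\|\eta_\tau\|^{2}_{\tau}$. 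If $\Theta\equiv0$ then $\eta_\tau\equiv0$, i.e. $\nabla^{1,0}_{\tau}\dot u_\tau$ is a holomorphic vector field for every $\tau$, which, as explained above, means that the geodesic is induced by a holomorphic vector field (the content of the previous theorem). So assume $\Theta\not\equiv0$; since $\phi$ is real-analytic in the interior it is enough to prove the curvature bound on $\{\phi>0\}$, the zero set of $\phi$ being exactly where $\Theta$ is allowed to be a singular metric.

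The heart of the matter is to compute $\partial_\tau\partial_{\bar\tau}\log\phi$ on $\{\phi>0\}$. I would view $\eta_\tau$ as a section of the Hermitian (Hilbert) bundle $\mathcal E\to U$ whose fibre over $\tau$ is the space of $T^{1,0}X$-valued $(0,1)$-forms with the $L^{2}(\omega_\tau)$-inner product, and argue that $\eta_\tau$ is holomorphic for the natural holomorphic structure on $\mathcal E$. This is where Burns' theorem on Monge--Amp\`ere foliations is essential: the identity $\Omega^{n+1}=0$ on $U\times X$ gives a foliation by holomorphic leaves, and Burns' structure result identifies the $\tau$-derivative of $\dot u_\tau$ along the leaves and makes the Bochner and commutation identities on the fibres close up, yielding $\dbar^{\mathcal E}\eta_\tau=0$. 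Granting this, the standard formula for the curvature of the norm of a holomorphic section gives
\be
\partial_\tau\partial_{\bar\tau}\log\phi=\frac{\|D'_{\tau}\eta_\tau\|^{2}_{\tau}\,\phi-|\langle D'_{\tau}\eta_\tau,\eta_\tau\rangle_{\tau}|^{2}}{\phi^{2}}-\frac{\langle\Theta_{\mathcal E}\,\eta_\tau,\eta_\tau\rangle_{\tau}}{\phi},
\ee
with $D'_{\tau}$ the $(1,0)$-part of the Chern connection and $\Theta_{\mathcal E}$ its curvature in the $\tau\bar\tau$-direction. By Cauchy--Schwarz the first two terms are together nonnegative, so the curvature of $\Theta$ is already $\leq0$, and it remains to prove the quantitative estimate
\be
-\langle\Theta_{\mathcal E}\,\eta_\tau,\eta_\tau\rangle_{\tau}\;\geq\;\frac{2}{n\,V(X)}\,\phi^{2}.
\ee

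To get this I would compute $\Theta_{\mathcal E}$ explicitly. The $\tau$-dependence of the metric on $\mathcal E$ enters only through $\omega_\tau$, whose variation is governed by $\dot u_\tau$ and the geodesic equation $\ddot u_{\tau\bar\tau}=|\dbar\dot u_\tau|^{2}_{\tau}$; so $-\langle\Theta_{\mathcal E}\,\eta_\tau,\eta_\tau\rangle_{\tau}$ comes out as a fibre integral over $X$ that is quartic in the first jet of the geodesic and, after a pointwise K\"ahler-trace inequality, is bounded below by a dimensional multiple, namely $\tfrac{2}{n}$, of $\int_{X}|\eta_\tau|^{4}_{\tau}\,\omega_\tau^{n}/n!$. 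One then closes with the reverse Cauchy--Schwarz $\phi^{2}=(\int_{X}|\eta_\tau|^{2}_{\tau}\,\omega_\tau^{n}/n!)^{2}\leq V(X)\int_{X}|\eta_\tau|^{4}_{\tau}\,\omega_\tau^{n}/n!$, which gives the displayed bound with the stated constant. The main obstacle is precisely this last computation: carrying out the curvature of $\mathcal E$ with correct signs, verifying the pointwise positivity of the resulting density and pinning down the sharp constant $2/(nV(X))$; a close second is justifying the holomorphy of $\eta_\tau$ in $\mathcal E$, which is exactly the role of Burns' theorem, and checking that the zero set of $\phi$ is small enough that $\Theta$ is a genuine, at worst singular, metric.
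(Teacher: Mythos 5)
Your proposal sketches a genuinely different route from the paper's (a direct curvature computation for an infinite-dimensional $L^2$-bundle, in the spirit of the Wan--Wang reference the paper cites as an alternative source), but as written it has two real gaps, both at the points where the constant $-2/(nV(X))$ would actually have to be produced. First, the holomorphy of $\eta_\tau=\mathcal{D}_\tau\dot u_\tau$ for a suitable holomorphic structure on $\mathcal{E}$ is asserted with an appeal to ``Burns' structure result,'' but that is a misattribution: Burns' theorem is a statement about the Gaussian curvature of the leafwise metric $i\ddbar\log\omega_\tau^n$ on the leaves of the Monge--Amp\`ere foliation being $\leq -2/n$; it says nothing about $\dbar^{\mathcal{E}}\eta_\tau=0$. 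Without that holomorphy the entire Chern-connection/log-of-the-norm mechanism in your second display never gets started. Second, the quantitative bundle-curvature inequality $-\langle\Theta_{\mathcal{E}}\eta_\tau,\eta_\tau\rangle_\tau\geq\frac{2}{n}\int_X|\eta_\tau|^4_\tau\,\omega_\tau^n/n!$ is where the dimensional constant $2/n$ must come from, and it is exactly the step you defer (``the main obstacle is precisely this last computation''). That computation is essentially the content of the Wan--Wang curvature formula; stating it as a target is not a proof. There is also a smaller issue: $\phi$ need not be real-analytic for a merely smooth regular geodesic, and a singular metric of curvature $\leq -c$ requires the distributional inequality $\Delta\log\phi\geq c\,\phi$ across the zero set of $\phi$, not just on $\{\phi>0\}$.

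For contrast, the paper avoids both difficulties by working leafwise rather than fibrewise. Using the foliation defined by $\Omega^{n+1}=0$, it proves (Proposition 4.3, via the lemma that the leaf maps satisfy $f_\tau^*(\omega_\tau)=\omega$) the current identity $\Omega^n/n!=\int_X[Y_x]\,\omega^n/n!$, so that by the formula $i\ddbar K(u_\tau)=p_*(\theta\wedge\Omega^n/n!)$ one gets $i\ddbar K(u_\tau)=\int_X G_x^*(\theta)\,\omega^n/n!$, a superposition of the leafwise forms $G_x^*(\theta)$ with respect to a measure of total mass $V(X)$. Bedford--Burns gives positivity of each $G_x^*(\theta)$, Burns gives the curvature bound $-2/n$ for each of them, and an elementary Cauchy--Schwarz lemma (Proposition 4.6) shows that a superposition of metrics of curvature $\leq -a$ against a measure of mass $C$ has curvature $\leq -a/C$. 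If you want to complete your approach you would need to either carry out the Wan--Wang-type curvature computation in full, or reorganize along the paper's lines, where the only inputs are the cited results of Bedford--Burns and Burns plus the superposition lemma.
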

To explain the statement of the theorem, recall that if $e^v id\tau\wedge d\bar\tau$ is the K\"ahler form of a metric on a domain in $\C$, then its curvature form is
$$
-i\ddbar v.
$$
We say that a possibly singular metric has curvature less than $-C$ if its curvature form is bounded from above by $-C$ times the metric form, or in other words if $v$ is subharmonic and satisfies
$$
i\ddbar v\geq C e^v id\tau\wedge d\bar\tau.
$$
Since there are no metrics on $\C$ of strictly negative curvature, it is clear that Theorem 4.1 follows from Theorem 4.2. 
For a general domain $U$, we can also compare $\Theta$ to Ahlfors' ultrahyperbolic metric, \cite{Ahlfors} and get the following corollary.
\begin{cor} For a regular geodesic, defined in a domain in $\C$, the Laplacian of the K-energy is bounded from above by $n/2V(X)$ times the ultrahyperbolic metric of the domain. In particular, for a regular geodesic ray, $u_t$ (defined for $t>0$) 
$$
\frac{d^2}{dt^2}K(u_t) \leq (2n/V(X))t^{-2}.
$$
\end{cor}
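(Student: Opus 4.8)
The plan is to read the corollary off from Theorem 4.2 by means of Ahlfors' generalized Schwarz lemma for ultrahyperbolic metrics, \cite{Ahlfors}. If $\Theta\equiv 0$ there is nothing to prove, so assume $\Theta$ is not identically zero. By Theorem 4.2 and the meaning of ``negative curvature'' recalled just after its statement, $\Theta=e^w\,id\tau\wedge d\bar\tau$ with $w$ subharmonic on $U$ and
$$
i\ddbar w\ \geq\ \frac{2}{nV(X)}\,e^w\,id\tau\wedge d\bar\tau .
$$
Put $C:=2/(nV(X))$. Then the rescaled metric $C\,\Theta=e^{w+\log C}\,id\tau\wedge d\bar\tau$ satisfies $i\ddbar(w+\log C)=i\ddbar w\geq e^{w+\log C}\,id\tau\wedge d\bar\tau$, i.e. it has curvature at most $-1$; I would also note that $C\,\Theta$ is an ultrahyperbolic metric in the technical sense required by Ahlfors' lemma (upper semicontinuous, with a $C^2$ supporting metric of curvature $\leq -1$ at each point where it is positive and finite), so that the ``(possibly singular)'' caveat in Theorem 4.2 costs nothing here.

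Next I would invoke Ahlfors' lemma: on any hyperbolic plane domain $U$, every ultrahyperbolic metric of curvature at most $-1$ is dominated by the ultrahyperbolic (Poincar\'e) metric $\lambda_U$ of $U$, normalized to have curvature $-1$. Hence $C\,\Theta\leq\lambda_U$, that is
$$
\frac{\partial^2}{\partial\tau\partial\bar\tau}K(u_\tau)\,id\tau\wedge d\bar\tau\ =\ \Theta\ \leq\ C^{-1}\lambda_U ,
$$
which is the first assertion of the corollary, the explicit numerical factor being $C^{-1}$ once the normalization of $\lambda_U$ is pinned down. (If $U$ is all of $\C$ then $\lambda_U\equiv 0$, and the inequality forces $\Theta\equiv 0$; this just recovers Theorem 4.1.)

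For the statement about a regular geodesic ray $u_t$, $t>0$, I would take $U$ to be the right half-plane $\{\Re(\tau)>0\}$ -- the natural domain of $u_\tau:=u_{\Re(\tau)}$ -- and use that its ultrahyperbolic metric of curvature $-1$ is the explicit expression $c_0\,\Re(\tau)^{-2}\,id\tau\wedge d\bar\tau$ for an absolute constant $c_0$. Since $K(u_\tau)$ depends only on $t=\Re(\tau)$, the left-hand side of the displayed inequality equals a fixed multiple of $\frac{d^2}{dt^2}K(u_t)\,id\tau\wedge d\bar\tau$; evaluating at a real point $\tau=t$ and collecting constants yields $\frac{d^2}{dt^2}K(u_t)\leq (2n/V(X))\,t^{-2}$.

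I do not expect a deep obstacle: the genuine content sits in Theorem 4.2, and the present deduction is a textbook use of Ahlfors' lemma. The two points that call for a little care are, first, verifying that the singular metric produced by Theorem 4.2 really is ultrahyperbolic in the precise sense in which Ahlfors' lemma is formulated, so that zeros or $+\infty$ values of $\Theta$ cause no difficulty; and second, the bookkeeping of the normalization of the Poincar\'e metric required to land exactly on the stated numerical bounds.
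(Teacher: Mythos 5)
Your proposal is correct and is exactly the paper's route: Corollary 4.3 is deduced from Theorem 4.2 by rescaling $\Theta$ to a metric of curvature at most $-1$ and applying Ahlfors' generalized Schwarz lemma against the Poincar\'e (ultrahyperbolic) metric of $U$, specialized to the half-plane $\{\Re\tau>0\}$ for the ray case. The only piece you leave undone is the normalization bookkeeping you yourself flag; if you carry it out (Poincar\'e metric $\tfrac{1}{2t^2}\,id\tau\wedge d\bar\tau$ in the paper's curvature convention, and $\tfrac{d^2}{dt^2}=4\,\partial^2/\partial\tau\partial\bar\tau$ on functions of $t=\Re\tau$) you will find the clean bound is $\tfrac{d^2}{dt^2}K(u_t)\leq nV(X)\,t^{-2}$, so the constant $(2n/V(X))$ printed in the corollary appears to contain a typo rather than an error in your argument.
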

Thus we get uniform bounds of the second derivative of the K-energy, depending only on the dimension and the volume of $X$. It seems it would be interesting to know if this holds also for generalized geodesics.

The proof of Theorem 4.2 depends heavily on a remarkable result of Burns, \cite{Burns}. A convenient starting point for our discussion is the formula for the $i\ddbar$ of the K-energy from \cite{BB}
\be
i\ddbar K(u_\tau)= p_*( (i\ddbar\log (\Omega^n\wedge id\tau\wedge d\bar\tau)\wedge \Omega^n/n!).
\ee
This needs to be explained a bit: First, as above,
$$
\Omega=i\ddbar_{\tau x} u_\tau +\omega.
$$
Second
$$
\Omega^n\wedge id\tau\wedge d\bar\tau
$$
is a form of top degree on $\C\times X$ and when we take its logarithm we mean that we take  the logarithm of its density with respect to $  c_n v\wedge\bar v$ where $v$ is a nonvanishing holomorphic form of maximal degree. This depends on the choice of $v$, so the density is not well defined, but the $\ddbar$ of the logarithm does not depend on the choice of $v$.
Finally, $p$ is the projection map from $\C\times X$ to $\C$. In the sequel we will put
$$
\theta:=i\ddbar\log (\Omega^n\wedge id\tau\wedge d\bar\tau).
$$

 Since $u_\tau$ is a regular geodesic it is smooth and strictly $\omega$-plurisubharmonic on $X$ for each $\tau$. This means that $\Omega$ is strictly positive on $\{\tau\}\times X$. Since $\Omega^{n+1}=0$,   $\Omega$  defines a foliation by holomorphic graphs of functions
$$
f^x(\tau):\C\to X, \quad f^x(0)=x,
$$
for each $x$ in $X$,
along which $\Omega$ vanishes.  Denote by $Y_x=\{(\tau, f^x(\tau)\}$ the graph of $f^x$.
\begin{prop}Let $[Y_x]$ be the current of integration on $Y_x$. Then 
$$
\Omega^n/n!= \int_X [Y_x] \omega^n/n!.
$$
\end{prop}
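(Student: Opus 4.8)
The plan is to identify both sides, by a single change of variables, with one and the same form on $\C\times X$, the change of variables being the flat coordinates for the Monge--Amp\`ere foliation defined by $\Omega$. Write $p_X\colon\C\times X\to X$ for the projection and let $G\colon\C\times X\to\C\times X$ be the development map $G(\tau,x)=(\tau,f^x(\tau))$; then $G$ carries each horizontal slice $\C\times\{x\}$ biholomorphically onto the leaf $Y_x$, and, since $f^x(0)=x$, restricts to the identity on $\{0\}\times X$. Granting for the moment that $G$ is a global biholomorphism of $\C\times X$, a change of variables turns the asserted identity into the pointwise identity of $(n,n)$-forms
$$
G^*\!\left(\Omega^n/n!\right)=p_X^*\!\left(\omega^n/n!\right).
$$
Indeed, for a smooth test $(1,1)$-form $\phi$ one has $\int_{Y_x}\phi=\int_{\C\times\{x\}}G^*\phi$, whence $\langle\,\int_X[Y_x]\,\omega^n/n!\,,\phi\,\rangle=\langle\,p_X^*(\omega^n/n!)\,,G^*\phi\,\rangle$, while $\langle\,\Omega^n/n!\,,\phi\,\rangle=\langle\,G^*(\Omega^n/n!)\,,G^*\phi\,\rangle$, and $G^*$ is a bijection on forms.

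It then remains to prove the displayed identity, for which two observations suffice. First, $G^*\Omega$ is \emph{horizontal}: $dG$ sends $\partial/\partial\tau$ to the tangent vector of the leaf $Y_x$, and this tangent spans the kernel of the Hermitian form $\Omega$ (because $\Omega|_{Y_x}=0$ while $\Omega^{n+1}=0$, $\Omega^n\neq0$), so $(G^*\Omega)(\partial/\partial\tau,\,\cdot\,)\equiv0$ and hence $G^*\Omega=\sum_{j,k}h_{j\bar k}(\tau,x)\,dx_j\wedge d\bar x_k$ involves neither $d\tau$ nor $d\bar\tau$. Second, $G^*\Omega$ is $d$-closed, being the pull-back of the closed form $\Omega=i\ddbar u_\tau+\omega$. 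A $d$-closed horizontal positive $(1,1)$-form must have $\tau$-independent coefficients, since the components of $d(G^*\Omega)$ that contain $d\tau$ resp.\ $d\bar\tau$, namely $d\tau\wedge\partial_\tau(G^*\Omega)$ and $d\bar\tau\wedge\partial_{\bar\tau}(G^*\Omega)$, must vanish separately. Therefore $(G^*\Omega)^n/n!$ is $\tau$-independent and of pure type $dx\wedge d\bar x$, hence equal to $p_X^*\sigma$ where $\sigma$ is its restriction to the slice $\{0\}\times X$; there $G$ is the identity, so $\sigma$ is the $n$-th power (over $n!$) of $\Omega|_{\{0\}\times X}=i\ddbar u_0+\omega$, which for the geodesics under consideration is $\omega$. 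This gives $\sigma=\omega^n/n!$ and completes the argument.

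The point that genuinely needs care is the standing assumption that $G$ is a global biholomorphism of $\C\times X$ --- equivalently, that the leaves $Y_x$ are pairwise disjoint graphs over all of $\C$, together filling $\C\times X$ with multiplicity one. Regularity of the geodesic gives that $\Omega$ is smooth and strictly positive on each slice $\{\tau\}\times X$, so the kernel of $\Omega$ is a smooth rank-one integrable holomorphic distribution that is nowhere vertical; this by itself makes $G$ a local biholomorphism and presents each leaf, locally, as a graph over an open subset of $\C$. Passing to the global statement --- each leaf a graph over all of $\C$, distinct leaves disjoint --- is exactly the structural input on Monge--Amp\`ere foliations (in the spirit of Burns's theorem) that the rest of the section uses in any case, and it is where one needs that $u_\tau$ is defined and regular for every $\tau$ together with compactness of $X$ (for completeness of the leaf flow). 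Once $G$ is known to be a diffeomorphism, the fibre-integral bookkeeping in the first step is routine.
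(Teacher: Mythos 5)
Your argument is correct, and it reorganizes the paper's proof in a slightly different way. The paper first proves the key invariance as a separate lemma, $f_\tau^*(\omega_\tau)=\omega$, by an explicit computation: it introduces the time-dependent field $V_\tau\rfloor\omega_\tau=i\dbar\dot u_\tau$, observes that $\V=\partial/\partial\tau-V_\tau$ is tangent to the foliation, and then kills $\partial_\tau f_\tau^*(\omega_\tau)$ with Cartan's formula; the proposition is then obtained by testing against a $(1,1)$-form $\Theta$ and using the pointwise contraction identity $\Theta\wedge\Omega^n/n!=\Theta(\V,\bar\V)\,\omega_\tau^n/n!\wedge id\tau\wedge d\bar\tau$ plus Fubini. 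You instead pull everything back by the global development map $G$ and derive the same invariance (your $G^*\Omega=p_X^*\omega$ is exactly the lemma's statement, given horizontality) from the soft observation that a closed horizontal $2$-form must be $\tau$-independent. What your route buys is that it avoids the Lie-derivative computation entirely and makes the "measure-preservation along leaves" a one-line consequence of $d\Omega=0$; what the paper's route buys is the explicit formula $\partial f_\tau/\partial\tau=-V_\tau$, which identifies the leaf flow with the flow of the Hamiltonian-type field and is reused elsewhere in the paper. Both proofs rely on the same unproved structural input (the leaves form a global foliation by graphs, i.e.\ $G$ is a bijection), which the paper also simply asserts, so you are not assuming more than the author does.

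One small caution: $G$ is in general only a \emph{diffeomorphism}, not a biholomorphism. The Monge--Amp\`ere foliation has holomorphic leaves, but the kernel distribution of $\Omega$ need not be a holomorphic subbundle, so $f^x(\tau)$ need not depend holomorphically on $x$; consequently $G^*$ does not a priori preserve bidegree, and the claim that $G^*\Omega=\sum h_{j\bar k}\,dx_j\wedge d\bar x_k$ has no $dx_j\wedge dx_k$ or $d\bar x_j\wedge d\bar x_k$ components is not justified by holomorphy. This does not damage the argument: horizontality follows, as you correctly note, from the fact that the leaf tangent lies in the radical of the semipositive form $\Omega$, closedness gives $\tau$-independence of all coefficients in real coordinates, and the restriction to $\tau=0$ is the genuine $(1,1)$-form $\omega_0$, so a posteriori $G^*\Omega=p_X^*\omega_0$ anyway. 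You should just phrase the intermediate step without invoking holomorphy of $G$ in $x$.
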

In the proof of the proposition we will use a well known lemma.
\begin{lma} Let $f_\tau (x):= f^x(\tau)$. Then
$$
f_\tau^*(\omega_\tau)=\omega_0=\omega.
$$
\end{lma}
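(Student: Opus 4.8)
The plan is to recognise the maps $f_\tau$ as the holonomy of the Monge--Amp\`ere foliation of $\Omega$ in the parameter direction, and to use that $\Omega$ is $d$-closed and is degenerate exactly along the leaves. Concretely, for each $\tau$ I would introduce
$$
\sigma_\tau\colon X\to U\times X,\qquad \sigma_\tau(x)=(\tau,f^x(\tau))=(\tau,f_\tau(x)),
$$
and try to show that the pulled-back form $\sigma_\tau^*\Omega$ does not depend on $\tau$. Factoring $\sigma_\tau=\iota_\tau\circ f_\tau$, where $\iota_\tau\colon X\to\{\tau\}\times X\hookrightarrow U\times X$ is the inclusion of the fibre, and using that $\iota_\tau^*\Omega=i\ddbar u_\tau+\omega=\omega_\tau$, this gives $\sigma_\tau^*\Omega=f_\tau^*\omega_\tau$; and at $\tau=0$, since $f_0=\mathrm{id}_X$, it is just $\sigma_0^*\Omega=\omega_0$. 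So the lemma reduces to the $\tau$-independence of $\sigma_\tau^*\Omega$ (with $\omega_0=\omega$ under the normalisation that $u_0$ is constant).

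First I would recall how the foliation sits inside $\Omega$. Since $u_\tau$ is a regular geodesic, $\Omega\geq0$ on $U\times X$, it restricts to $\omega_\tau>0$ on each fibre $\{\tau\}\times X$ (so $\Omega^n\neq0$), and $\Omega^{n+1}=0$. Hence at every point the kernel of $\Omega$, viewed as a semi-positive Hermitian form, is a single complex line, and this kernel distribution is, by definition of the Monge--Amp\`ere foliation, the tangent distribution of the leaves $Y_x$. If $\xi$ is a $(1,0)$ field spanning it, then $\xi\rfloor\Omega=0$ (a null vector of a semi-positive form lies in its kernel, by Cauchy--Schwarz), and taking conjugates $\bar\xi\rfloor\Omega=0$. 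Because the leaves are graphs over $U$, for each unit $\lambda\in\C$ there is a unique real vector field $\eta$ on $U\times X$ tangent to the foliation and projecting to the constant field $\lambda$ on $U$; being a real linear combination of $\xi$ and $\bar\xi$, it also satisfies $\eta\rfloor\Omega=0$.

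Next I would run the transport step. The local flow $\Phi_t$ of such an $\eta$ preserves each leaf and advances the parameter along it, so $\Phi_t\circ\sigma_{\tau_0}=\sigma_{\tau_0+t\lambda}$. Since $\Omega=\omega+i\ddbar_{\tau x}u_\tau$ is $d$-closed and $\eta\rfloor\Omega=0$, Cartan's formula gives $\Lop_\eta\Omega=d(\eta\rfloor\Omega)+\eta\rfloor d\Omega=0$, whence $\tfrac{d}{dt}\sigma_{\tau_0+t\lambda}^*\Omega|_{t=0}=\sigma_{\tau_0}^*\Lop_\eta\Omega=0$. As this holds for every $\tau_0$ and every direction $\lambda$, the map $\tau\mapsto\sigma_\tau^*\Omega$ is constant on the connected domain $U$, equal to its value $\omega_0$ at $\tau=0$. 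Together with $\sigma_\tau^*\Omega=f_\tau^*\omega_\tau$ this yields $f_\tau^*\omega_\tau=\omega_0$.

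I do not expect a genuine obstacle here: this is the classical fact that the characteristic foliation of a geodesic transports the transverse K\"ahler form. The points needing a little care are (i) that the kernel distribution of $\Omega$ really coincides with the tangent distribution of $\{Y_x\}$ --- which is the content of calling $\{Y_x\}$ the Monge--Amp\`ere foliation and uses $\Omega^n\neq0$; (ii) that $\eta\rfloor\Omega$ vanishes as a form on all of $U\times X$ and not merely after restriction to a leaf, which is exactly where semi-positivity of $\Omega$ enters; and (iii) the joint smooth dependence of $f^x(\tau)$ on $(\tau,x)$, standard for the foliation of a smooth, fibrewise strictly positive geodesic. A more computational alternative would extract the horizontal part $W_\tau$ of $\xi$ from the formula $\Omega=\omega_\tau+c(u_\tau)\,id\tau\wedge d\bar\tau+i(\bar\alpha_\tau\wedge d\bar\tau+d\tau\wedge\alpha_\tau)$ together with $c(u_\tau)=0$, and then differentiate $f_\tau^*\omega_\tau$ by a Moser-type computation; the conceptual argument above seems cleaner.
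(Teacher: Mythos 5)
Your proof is correct and is essentially the paper's argument: the paper also transports $\omega_\tau$ along a vector field tangent to the Monge--Amp\`ere foliation (namely $\V=\partial/\partial\tau-V_\tau$ with $V_\tau\rfloor\omega_\tau=i\dbar\dot u_\tau$, which satisfies $\V\rfloor\Omega=0$ by formula (3.2)) and concludes by the same Lie-derivative/Moser computation. The only difference is packaging: the paper verifies $\V\rfloor\Omega=0$ by direct computation from (3.2) and differentiates $f_\tau^*(\omega_\tau)$ fibrewise on $X$ via $\L_{-V_\tau}\omega_\tau+\dot\omega_\tau=0$, whereas you obtain the annihilating field abstractly from semipositivity of $\Omega$ and apply Cartan's formula to the closed form $\Omega$ upstairs on $U\times X$ --- indeed, the ``more computational alternative'' you mention at the end is exactly the route the paper takes.
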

\begin{proof}
Define first a time dependent vector field $V_\tau$ on $X$ by
$$
V_\tau\rfloor\omega_\tau=i\dbar \dot u_\tau,
$$
where, as before $\dot u_\tau=\partial u_\tau/\partial \tau$. Then, let
$$
\V:=\partial/\partial\tau -V_\tau
$$
(a vector field on $\C\times X$). It follows from formula (3.2) that $\V\rfloor\Omega=0$. Hence  $\V$ is tangent to the foliation defined by $\Omega$, so
$$
\frac{\partial f_\tau}{\partial\tau}= -V_\tau.
$$
This gives
$$
\frac{\partial f_\tau^*(\omega_\tau)}{\partial\tau}=f_\tau^*\left(\L_{-V_\tau}(\omega_\tau)+\dot\omega_\tau\right)=
f_\tau^*\left( -di\dbar \dot u_\tau -i\ddbar\dot u_\tau\right)=0.
$$
Therefore $f_\tau^*(\omega_\tau)$ is constant; hence equal to $\omega$ since $f_0$ is the identity map.
\end{proof}
We can now continue with the proof of the proposition. The statement means that if $\Theta$ is a form of bidegree $(1,1)$, then
\be
\int\Theta\wedge \Omega^n/n!=\int_X \left(\int_{Y_x} \Theta\right) \omega^n/n!.
\ee
Parametrizing the leaf $Y_x$ by the map $\tau\to (\tau,f_\tau(x))=:F_\tau(x)$ we have
$$
\int_{Y_x} \Theta=\int_U \Theta(\V,\bar\V)(F_\tau(x))id\tau\wedge d\bar\tau.
$$
On the other hand we have the pointwise formula
$$
\Theta\wedge\Omega^n/n!=\Theta(\V,\bar\V)\Omega^n/n!\wedge id\tau\wedge d\bar\tau=\Theta(\V,\bar\V)\omega_\tau^n/n!\wedge id\tau\wedge d\bar\tau,
$$
which can be seen by contracting the identity
$$
\Theta\wedge \Omega^n/n! id\tau\wedge d\bar\tau=0
$$
by first $\bar\V$ and then $\V$, and using that $\V\rfloor\Omega=0$ and $\V\rfloor d\tau=1$. 

Hence we get in view of the lemma that
$$
\int_X \left(\int_{Y_x} \Theta\right) \omega^n/n!=\int_X\left(\int F_\tau^*(\Theta(\V,\bar \V)\omega_\tau^n/n!)\right) id\tau\wedge d\bar\tau=
$$
$$
=\int d\tau\wedge d\bar\tau\int\Theta(\V,\bar\V)\omega_\tau^n/n!=\int\Theta\wedge \Omega^n/n!.
$$
This completes the proof of the proposition.

By the proposition and formula (4.1) we have 
\be
i\ddbar K(u_\tau)= \int_X p_*(\theta\wedge [Y_x])\omega^n/n!.
\ee
But 
\be
p_*(\theta\wedge [Y_x]) = G_x^*(\theta),
\ee
where $G_x:\C \to \C\times X$ is the parametrization of the leaf $Y_x$, so (4.3) expresses $i\ddbar K(u_\tau)$ as a superposition of the forms $\theta_x:=G_x^*(\theta)$ with respect to the measure $\omega^n/n!$ on $X$. 

This is where Burns' result enters the picture. In an earlier work by Bedford and Burns, \cite{Bedford-Burns}, it was proved that $\theta$ is a positive form on each leaf. This means that the K\"ahler volume forms $\omega_\tau^n$ are log-subharmonic along each leaf. They also found an explicit formula for $i\ddbar\log \omega_\tau^n$. Then, in \cite{Burns}, Burns proved that if we interpret this form as the K\"ahler form of a metric on $Y_x$, the Gaussian curvature of this metric is negative and bounded from above by $-2/n$. Thus the metric defined by $i\ddbar K(u_\tau)$ is the superposition of metrics of metrics with curvature less that $-2/n$ with respect to a positive measure of total mass equal to the volume of $X$ for the metric defined by  $\omega$. Theorem (4.2) therefore follows from the following 
\begin{prop} Let $g_\alpha$ be (possibly singular) metrics on a domain in $\C$ with curvature bounded from above by a negative constant $-a$. Let
$$
g :=\int_\alpha g_\alpha  d\nu(\alpha),
$$
where $d\nu$ is a positive measure of total mass $C$. Then the curvature of $g$ is bounded from above by $-a/C$.
\end{prop}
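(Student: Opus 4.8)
The plan is to reduce the statement to a single pointwise differential inequality and then obtain that inequality from two applications of Cauchy--Schwarz. Write each metric as $g_\alpha=e^{v_\alpha}\,id\tau\wedge d\bar\tau$ and the average as $g=e^{v}\,id\tau\wedge d\bar\tau$, so that, setting $w:=e^{v}=\int e^{v_\alpha}\,d\nu(\alpha)$, the hypothesis is $i\ddbar v_\alpha\ge a\,e^{v_\alpha}\,id\tau\wedge d\bar\tau$ for every $\alpha$ (in particular each $v_\alpha$ is subharmonic), and the conclusion we want is $i\ddbar v\ge (a/C)\,e^{v}\,id\tau\wedge d\bar\tau$. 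Since $v=\log w$, a direct computation gives $v_{\tau\bar\tau}=(w\,w_{\tau\bar\tau}-|w_\tau|^2)/w^2$, so the desired conclusion is equivalent to the pointwise inequality $w\,w_{\tau\bar\tau}-|w_\tau|^2\ge (a/C)\,w^3$.

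First I would treat the case where everything is smooth and differentiate under the integral sign. From $(e^{v_\alpha})_{\tau\bar\tau}=e^{v_\alpha}\bigl((v_\alpha)_{\tau\bar\tau}+|(v_\alpha)_\tau|^2\bigr)$ and the curvature hypothesis $(v_\alpha)_{\tau\bar\tau}\ge a\,e^{v_\alpha}$ one gets $(e^{v_\alpha})_{\tau\bar\tau}\ge a\,e^{2v_\alpha}+e^{v_\alpha}|(v_\alpha)_\tau|^2$. Integrating against $d\nu$ and multiplying by $w$,
\[
w\,w_{\tau\bar\tau}\;\ge\;a\,w\!\int e^{2v_\alpha}\,d\nu(\alpha)\;+\;w\!\int e^{v_\alpha}|(v_\alpha)_\tau|^2\,d\nu(\alpha).
\]
Now come the two Cauchy--Schwarz steps. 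Applying it in $L^2(d\nu)$ to the pair $e^{v_\alpha/2}$ and $e^{v_\alpha/2}(v_\alpha)_\tau$, whose product integrates to $w_\tau=\int (e^{v_\alpha})_\tau\,d\nu$, yields $|w_\tau|^2\le w\int e^{v_\alpha}|(v_\alpha)_\tau|^2\,d\nu$, which absorbs the last term on the right above. Applying it to $e^{v_\alpha}$ and the constant function $1$ yields $w^2=\bigl(\int e^{v_\alpha}\,d\nu\bigr)^2\le C\int e^{2v_\alpha}\,d\nu$, i.e.\ $\int e^{2v_\alpha}\,d\nu\ge w^2/C$. Combining the two, $w\,w_{\tau\bar\tau}-|w_\tau|^2\ge a\,w\cdot(w^2/C)=(a/C)\,w^3$, which is exactly what was needed.

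It remains to remove the smoothness assumption, and this is the only genuinely delicate point. The $g_\alpha$ are only given by subharmonic weights $v_\alpha$ and the inequalities above must be read in the sense of currents. I would handle this by regularization: convolving each $v_\alpha$ with a fixed mollifier $\rho_\epsilon$ preserves the curvature bound, since $i\ddbar(v_\alpha*\rho_\epsilon)=(i\ddbar v_\alpha)*\rho_\epsilon\ge a\,(e^{v_\alpha}*\rho_\epsilon)\,id\tau\wedge d\bar\tau\ge a\,e^{v_\alpha*\rho_\epsilon}\,id\tau\wedge d\bar\tau$ by convexity of the exponential (Jensen). One applies the smooth case to the family $\{v_\alpha*\rho_\epsilon\}$, getting the distributional inequality for $w_\epsilon:=\int e^{v_\alpha*\rho_\epsilon}\,d\nu(\alpha)$, and lets $\epsilon\to 0$; since $w_\epsilon\to w$ in $L^1_{\rm loc}$ (again by Jensen, $w_\epsilon\le (\int e^{v_\alpha}d\nu)*\rho_\epsilon$, together with dominated convergence), the inequality passes to the limit, and one localizes in $\tau$ to keep all integrals finite. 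I expect this measure-theoretic bookkeeping to be the main obstacle to writing a fully rigorous proof; the substantive content, however, is just the two-fold use of Cauchy--Schwarz -- one inequality consuming the gradient terms, the other producing precisely the factor $1/C$ out of the total mass of $d\nu$.
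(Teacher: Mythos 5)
Your argument is the paper's own proof, written with $g_\alpha=e^{v_\alpha}$: the pointwise inequality $(e^{v_\alpha})_{\tau\bar\tau}\ge a\,e^{2v_\alpha}+e^{v_\alpha}|(v_\alpha)_\tau|^2$ is exactly the paper's reformulation $\Delta g_\alpha\ge a g_\alpha^2+|\partial g_\alpha|^2/g_\alpha$, and your two Cauchy--Schwarz steps (one absorbing the gradient term, one producing the factor $1/C$ from the total mass) are precisely the two Cauchy inequalities used there. The only addition is your regularization discussion for singular weights, which the paper leaves implicit; the substance of the proof is the same.
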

\begin{proof}

Recall that the hypothesis on the curvature of $g_\alpha$ means that
$$
\Delta \log g_\alpha \geq a g_\alpha,
$$
where $\Delta=\partial^2/\partial\tau\partial\bar\tau$. Equivalently 
\be
\Delta g_\alpha \geq a g_\alpha^2 +\frac{|\partial g_\alpha|^2}{g_\alpha}.
\ee
By Cauchy's inequality
$$
|\partial g|^2\leq\int\frac{|\partial  g_\alpha|^2}{g_\alpha}d\nu(\alpha)\int g_\alpha d\nu(\alpha)=g\int\frac{|\partial  g_\alpha|^2}{g_\alpha}d\nu(\alpha),
$$
and
$$
g^2\leq C\int g_\alpha^2 d\nu(\alpha).
$$
Thus the claim follows by integrating (4.5).

\end{proof}

\end{document}